\definecolor{blue}{RGB}{0,0,255}
\definecolor{green}{RGB}{50,150,50}
\definecolor{red}{RGB}{255,0,0}
\newtheorem{theorem}{Theorem}[section]
  \newtheorem{lemma}[theorem]{Lemma}
   \newtheorem{corollary}[theorem]{Corollary}
  \theoremstyle{definition}
  \newtheorem{example}[theorem]{Example}
   \newtheorem{remark}[theorem]{Remark}
\newcommand{\ideal}[1]{\langle #1 \rangle}
\newcommand{\CC}{\mathbb C}
\newcommand{\RR}{\mathbb R}
\newcommand{\QQ}{\mathbb Q}
\newcommand{\ZZ}{\mathbb Z}
\newcommand{\ba}{\mathbf a}
\newcommand{\be}{\mathbf e}
\newcommand{\bx}{\mathbf x}
\newcommand{\cT}{\mathcal T}
\newcommand{\cA}{\mathcal A}
\DeclareMathOperator{\init}{in}
\DeclareMathOperator{\tinit}{t-in}
\DeclareMathOperator{\trop}{\cT}
\DeclareMathOperator{\V}{\mathcal{V}}
\newcommand{\mult}[2]{\operatorname{mult}_{#2}({#1})}
\date{\today}
\title[]{Computing tropical curves\\via homotopy continuation}
\author{Anders Jensen}
\address{ Institut for Matematik, Aarhus University, Aarhus, Denmark }
\email {jensen@imf.au.dk}
\author{Anton Leykin} 
\address{ School of Mathematics, Georgia Institute of Technology, Atlanta GA, USA}
\email{leykin@math.gatech.edu}
\author{Josephine Yu}
\address{ School of Mathematics, Georgia Institute of Technology, Atlanta GA, USA}
\email{jyu@math.gatech.edu}
\thanks{}
\date{\today}
\begin{document}
\maketitle
 
\begin{abstract} 
Exploiting a connection between amoebas and tropical curves, we devise a method for computing tropical curves using numerical algebraic geometry and give an implementation. As an application, we use this technique to compute Newton polygons of $A$-polynomials of knots. 
\end{abstract}
 
\section{Introduction}

We present a construction of the tropicalization of a complex curve using numerical methods.  Our procedures produce numerical data such as floating point approximations of points on the curve and then translate them into discrete information such as primitive integer vectors along the rays of the desired tropical curve and their multiplicities.  

The connection between numerical homotopy continuation and tropical geometry first appeared in the work of Huber and Sturmfels on polyhedral homotopies~\cite{HuberSturmfels}.  Recent works in this direction include numerical recovery of truncated Puiseux series for curves and surfaces~\cite{Adrovic-Verschelde:Puiseux-for-surfaces,Adrovic-Verschelde:exploiting-symmetry-cyclic} and a method to compute tropical hypersurfaces numerically~\cite{Hauenstein-Sottile:NP-and-witness-sets}.

As tropical varieties are subfans of Gr\"obner fans, it is natural that the current methods for computing tropical varieties rely on Gr\"obner basis computations.  The most general method is implemented in the software Gfan~\cite{Gfan} and described in~\cite{BJSST}. We present in \S\ref{section:main} a new method for computing tropical curves numerically.  

In the fan traversal methods such as those used in Gfan, one has to compute tropical varieties locally at a codimension one cone, and this computation can be reduced to the case of a curve. The case of curves is an important building block for computing tropical varieties in general. An alternative algorithm presented in~\cite{chanphd} computes tropical curves by computing a set of two variable elimination ideals and thereby the projections of the tropical curve to a set of coordinate 2-planes. From this combinatorial data the tropical curve is reconstructed. However, this technique still relies heavily on Gr\"obner bases. If the numerical method of ~\cite{Hauenstein-Sottile:NP-and-witness-sets} is employed to compute projected planar tropical curves, it is likely to suffer from a ``high-degree-low-dimension curse'': the projection of a variety cut out by low-degree polynomials to a 2-plane may be given by a single (unknown) polynomial of high degree in two variables. 

 In the framework of numerical algebraic geometry (numerical AG)~\cite{SVW9,Sommese-Wampler-book-05}, we do not compute {\em any} Gr\"obner bases and use homotopy continuation algorithms instead.  For implementations we use three software packages developed for numerical AG: PHCpack~\cite{PHCpack}, Bertini~\cite{Bertini-book}, and NAG4M2~\cite{Leykin:NAG4M2}. Particular features of all three are necessary: see how these are combined within the Macaulay2~\cite{M2www} environment in \S\ref{section:implementation}.

While certification --- giving the numerically computed results the grade of a proof --- is currently not possible in the general case, we develop hybrid symbolic-numerical validation procedures that shall provide confidence in the obtained results in many cases. For further discussion see~\S\ref{subsec:certification}.

\medskip
\noindent
{\bf Convention:}
We use the {\em max convention} for tropical geometry:  the initial form of a polynomial contains the {\em maximal} degree terms, the normal vectors of Newton polytopes are outward-pointing, and the degree of ``$t$'' in Puiseux series and in $t$-initial forms is $-1$.

\medskip
\noindent
{\bf Acknowledgments:}  We would like to thank Henry Duong, who successfully studied several small examples of the problem for his REU project at Georgia Tech in 2011; Robert Krone, who participated in our early discussions; and Stavros Garoufalidis, who consulted us on examples coming from the knot theory (see~\S\ref{section:knots}). AJ was supported by the Danish Council for Independent Research, Natural Sciences (FNU). AL was partially supported by NSF-DMS grant \#~1151297, and JY by \#~1101289.

\section{Computing tropical curves}\label{section:main}

Let $I$ be an ideal in a polynomial ring $\CC[x_1,\dots,x_n]$.  The tropical variety of $I$ is the following subfan of the Gr\"obner fan of $I$
$$
\trop(I) = \{ w \in \RR^n : \init_w(I) \text{ contains no monomials}\}.
$$
Throughout this paper we assume that the variety of $I$ in the algebraic torus~$(\CC^*)^n$ is equidimenstional of dimension one, although it may have components of different dimensions in the boundary of the torus.  Although in theory it is more natural to consider $I$ as ideal in the Laurent polynomial ring $\CC[x_1^{\pm1},\dots,x_n^{\pm 1}]$, we choose to write about polynomials because they better reflect the way computations are done.

We will describe a procedure for computing the tropical variety numerically, as follows.  First compute the degree of the curve in the torus by slicing with a generic hyperplane.  Then proceed with the following steps.
\begin{enumerate}
\item Find some possible rays in the tropical variety by sampling points along the tentacles of the amoeba.
\item Compute for each found possible ray its multiplicity in the tropical curve.  The ray has multiplicity zero if it is not in the tropical curve.
\item Check whether the rays, with multiplicities, make up a tropical curve with correct degree.  If not, go back to Step (1).
\end{enumerate}
We will describe each of the three steps in detail in the following three subsections.

Our procedure produces the correct tropical curve assuming that the numerical computations are reasonably reliable. To find all the rays of the tropical curve, it suffices, in theory, to take two generic parallel slices of the amoeba or any $n+1$ hyperplane slices whose origin facing normal vectors positively span $\RR^n$ --- assuming that slices are taken far away from the origin. Moreover, Corollary~\ref{cor:degree} below limits the possible candidate rays of the tropical curve to a finite set.  Therefore the Step~(1) will not go on forever.  If we are conservative with numerical procedures in Step~(2), then we will not pick up false rays for our tropical curve.  Using Lemma~\ref{lem:degree} below we keep track of how far we are from a complete curve with the right degree and this tells us when to terminate. 

\subsection{Finding rays in tropical curves by slicing amoebas}
\label{section:find}

We will use amoebas to find candidates for vectors in the tropical curve.

Let $V(I)$ be the variety of $I$ in $(\CC^*)^n$. Consider the logarithm map 
$$\log : (\CC^*)^n \rightarrow \RR, ~~ (z_1,\dots,z_n) \mapsto (\log|z_1|, \dots, \log|z_n|).$$ The image of $V(I)$ under the map $\log$ is called the {\em amoeba} of $V(I)$ and is denoted $\cA(I)$ \cite{GKZ}. See Figure~\ref{fig:amoeba}.
The tropical variety is the limit:
$$
\trop(I) = \lim_{t \rightarrow \infty} \frac{1}{t} \cA(I).
$$
In the book by Maclagan and Sturmfels \cite{MaclaganSturmfels}, this is called the {\em Bergman construction} of tropical varieties from amoebas.  When $V(I)$ is a curve, the tropical variety consists of a finite set of rays in directions of the {\em tentacles} of $\cA(I)$.

\begin{figure}
\includegraphics[scale=0.8]{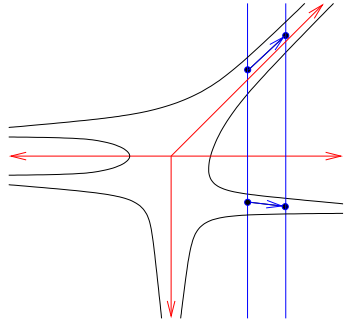}
\caption{Slice the amoeba with two  hyperplanes to find approximate directions along the tentacles.}
\label{fig:amoeba}
\end{figure}

Using numerical AG, we can compute numerical approximations of points in a zero-dimensional variety. We compute points along tentacles in the amoeba and find integer vectors along the tentacles, as follows. 

Let $H$ be a (usual) hyperplane $\{X \in \RR^n : X \cdot \ba = c\}$ where $\ba$ is an integer vector and $c$ is a real number.  Then $H$ is the amoeba of the hypersurface defined by a binomial $\bx^\ba = C$ where $C$ is a complex number with $\log(|C|) = c$.  We use homotopy continuation to compute the solutions $V(I_{\ba,C})$ where
\begin{equation}\label{equation:I_aC}
I_{\ba,C} := I + \langle  \bx^\ba - C \rangle.
\end{equation} 
The logarithms of the solutions lie in the intersection $\cA(I) \cap H$.  We then choose another complex number $C'$ with $|C'| > |C|$ and track the original solutions to the parallel slice $V(I_{\ba,C'})$. To this end we arrange a homotopy $C(s)$ in $s\in[0,1]$ such that $|C(s)|$ increases monotonously, $C(0)=C$, and $C(1)=C'$.  This way, if a point from the first slice lies in a tentacle, then it is guaranteed to stay in the same tentacle while moving toward the second slice.  

\smallskip

Now we hope to find integer vectors close to the difference vectors (see Figure~\ref{fig:amoeba}). In this step, we rely heuristically on either lattice reduction algorithms such as LLL or continued fraction techniques.  

We briefly describe the LLL trick.  Let $\bx = (x_1,\dots,x_n)$ be a vector in $\RR^n$, and suppose we wish to find a vector with small integer entries in approximately the same direction as $\bx$.  We first multiply $\bx$ with a large constant and round off to integers, so that $\bx$ is an integer vector with large entries.  Let $M$ be an $n \times (n-1)$ integer matrix whose columns span a lattice perpendicular to $\bx$.  Heuristically, if $x_k$ has the largest absolute value among all the $x_i$'s, then we take the columns of $M$ to be $x_i \be_k - x_k \be_i$ for $i \neq k$. Let $M' = \left[ M \,|\, I \,\right]$, an $n \times ((n-1)+n)$ matrix.  Apply the LLL lattice basis reduction algorithm to find a short vector $\bx'$ in the lattice spanned by the {\em rows} of $M'$. The first $n-1$ entries of $\bx'$ should be small.  This means that the vector of the last $n$ entries (which captures how the small vector was obtained from the rows of $M$) should be almost perpendicular to the columns of $M$; hence it is a small integer vector almost in the direction of $\bx$.

\begin{remark}\label{remark:C(s)}
In practice, ideally, we want to have a set $F$ of generators of $I$ that form a complete intersection and such that the polynomial system \[F_{\ba,C(s)} = (F,\bx^\ba - C(s))\] is a square system with regular solutions for $s\in[0,1]$.    

Assume that $F$ is indeed a regular sequence. Moreover, assume that $F_{\ba,C}$ has only regular (isolated) solutions for all but finitely many $C\in \CC$. Let $C(s) = (1+sA)C$ with positive $A\in\RR$. For a generic choice of $C$ one can prove that the corresponding homotopy is regular, i.e.\ the jacobian $\frac{\partial}{\partial \bx} F_{\ba,C(s)}$ does not vanish at points $V(F_{\ba,C(s)})$ for $s\in[0,1]$, which is essential for numerical path-tracking algorithms. 

Relying on the usual probabilistic algorithms  of numerical AG such as {\em squaring up} and {\em deflation}, one can bring the general case to the scenario above. For instance, given some generators $G$ of $I$, one can produce a regular sequence $F$ by taking $n-1$ generic linear combinations of $G$. This results in $V(F)$ that contains our curve $V(I)$ as a component. 
\qed
\end{remark}

We need to show that we do not miss any tentacles of the amoeba while taking a binomial slice.  

\begin{lemma}
Let $I$ be an ideal defining a curve in $(\CC^*)^n$.  Let $C$ be a positive real number. For any real number $\alpha$, the logarithm of the points in $\V(I + \langle \bx^\ba - C\, e^{\alpha i} \rangle)$ meets every connected component of the intersection of amoeba $\cA(I)$  with the hyperplane $H$ defined by $ X \cdot \ba = \log(C)$.
\end{lemma}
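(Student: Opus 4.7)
My plan is to show that the monomial map $m_\ba(z) := z^\ba$ sends $\V(I) \cap \log^{-1}(K)$ onto the entire circle $S_C := \{w \in \CC^* : |w| = C\}$ for every connected component $K$ of $\cA(I) \cap H$. This proves the lemma, because any preimage of $Ce^{i\alpha}$ under $m_\ba$ is a point of $\V(I + \langle \bx^\ba - Ce^{i\alpha}\rangle)$ whose logarithm lies in $K$.

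I would then run the classical nonempty--open--closed argument on the image $m_\ba(\V(I) \cap \log^{-1}(K)) \subseteq S_C$. Non-emptiness is immediate: any $p \in K$ is the logarithm of some $z \in \V(I)$ by definition of the amoeba, and $|z^\ba| = \exp(\ba \cdot p) = C$ places $z^\ba$ on $S_C$. For openness, fix $z \in \V(I) \cap \log^{-1}(K)$ at which $\V(I)$ is smooth and the restriction $m_\ba|_{\V(I)}$ is unramified. Then $m_\ba$ restricted to the real hypersurface $\V(I) \cap \log^{-1}(H) = \V(I) \cap \{|z^\ba| = C\}$ is a local diffeomorphism onto $S_C$ at $z$. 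For a small connected neighborhood $U$ of $z$ in this hypersurface, $\log(U)$ is a connected subset of $\cA(I) \cap H$ meeting $K$ at $\log z$, so it must lie entirely inside the single component $K$. Hence $m_\ba(U)$ is an open arc of $S_C$ contained in the image and containing $z^\ba$.

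Closedness is the step I expect to require the main work, because $\V(I) \cap \log^{-1}(K)$ is generally not compact. Given $z_k \in \V(I) \cap \log^{-1}(K)$ with $z_k^\ba \to w \in S_C$, if $\{z_k\}$ stays in a bounded part of $(\CC^*)^n$ then closedness of $\V(I)$ in $(\CC^*)^n$ combined with closedness of the component $K$ in $H$ delivers a limit $z \in \V(I) \cap \log^{-1}(K)$ with $z^\ba = w$. The obstruction is escape of $z_k$ to infinity along a tentacle of $\cA(I)$ whose direction $v$ satisfies $\ba \cdot v = 0$, for then $|z_k^\ba|$ can stabilize asymptotically near $C$. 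I would control this by passing to a toric projective compactification where $m_\ba$ extends to a proper map $\overline{m_\ba} : \overline{\V(I)} \to \PP^1$: each boundary point is sent to one of a finite list of values in $\PP^1$ dictated by the pairing of $\ba$ with the associated tentacle direction. For all but finitely many $w \in S_C$, no boundary limit can produce $w$, so $\{z_k\}$ must be bounded and the previous sub-argument supplies the limit. The finitely many remaining $w$ are the residual technicality of the proof and would require a more careful local Puiseux-series analysis at the offending boundary points (or a mild genericity assumption on $C$).
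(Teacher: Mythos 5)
Your route is genuinely different from the paper's. The paper's entire proof is two sentences: as $\theta$ varies over the circle, the fibers $\V(I+\langle \bx^\ba - C e^{i\theta}\rangle)$ sweep out all of $\cA(I)\cap H$, ``hence'' every point of a component can be joined by a path inside $\cA(I)\cap H$ to a point of the fiber over $\alpha$ --- implicitly a path-lifting argument whose lifting step is never justified. You instead prove surjectivity of $z\mapsto z^\ba$ from $\V(I)\cap\log^{-1}(K)$ onto the circle $S_C$ by the nonempty--open--closed method, and you correctly locate the entire difficulty in the closedness step: sequences escaping to infinity along tentacles whose direction pairs to zero with $\ba$, i.e.\ boundary points of a compactification of the curve that still map into $S_C$.

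The ``residual technicality'' you flag for finitely many $w\in S_C$ is not something a more careful local analysis can remove: the lemma as stated is false at exactly those values. Take $n=2$, $I=\langle x+y-1\rangle$, $\ba=(1,0)$, $C=1$, $\alpha=0$. Then $\cA(I)\cap H=\{0\}\times(-\infty,\log 2]$ is a nonempty connected set, but $\V(I+\langle x-1\rangle)\cap(\CC^*)^2=\emptyset$ because the unique solution $(1,0)$ lies outside the torus. So a genericity hypothesis on $C e^{i\alpha}$ (which the algorithm satisfies in practice, since $C$ is chosen at random) is necessary, and the paper's own proof silently glosses over precisely the escape-to-infinity phenomenon that you isolate; your version, completed under that genericity assumption, is the more honest argument. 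One small repair to your openness step: you only treat smooth points where $m_\ba|_{\V(I)}$ is unramified, but a nonconstant holomorphic map from a curve is open even at ramification points, and at singular points one can pass to the normalization; openness therefore holds at every point of the image except on components where $z^\ba$ is constant --- another degenerate case in which the statement genuinely fails for all but one value of $\alpha$ modulo $2\pi$.
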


\begin{proof} 
As $\theta$ varies in the circle $\RR / 2\pi$, the image of  $\V(I + \langle \bx^\ba - C \, e^{\theta i} \rangle)$ under the logarithm map traces out all points in $\cA(I) \cap H$. Hence every point in $\cA(I) \cap H$ is connected to a point in the logarithm of $\V(I + \langle \bx^\ba - C \, e^{\alpha i} \rangle)$ via a path staying inside $\cA(I) \cap H$.
\end{proof}

We may not know whether the slicing hyperplane goes through the tentacles or the ``body'' of the amoeba, and we may not know if the amoeba tentacle is thin enough for our approximation to give the correct integer vector.  In practice, the procedure described above involves many heuristics and results in a set of candidate rays.  It remains to explain how to:
\begin{itemize}
\item verify that a given vector belongs to the tropical curve;
\item verify that a set of vectors is complete, i.e.\ that no rays are missing.
\end{itemize}

\subsection{Computing multiplicity numerically}
\label{section:check}
The key ingredient in both tasks above is computing the {\em multiplicity} of points in a tropical variety. Let $I$ be an ideal in $\CC[x_1,\dots,x_n]$, and let $\omega \in \RR^n$.  The multiplicity $\mult{\trop(I)}{\omega}$ of $\omega$ in $\trop(I)$ is the sum of multiplicities of monomial-free minimal associated primes of $\init_\omega(I)$~\cite{MaclaganSturmfels}.  If the point $\omega$ is not in the tropical variety $\trop(I)$, then the multiplicity is zero.  

The multiplicity is also the degree of the initial ideal after ``taking out the torus action'' as follows.  Let $\langle \init_\omega (I) \rangle$ denote the ideal in the Laurent polynomial ring $\CC[x_1^{\pm 1},\dots,x_n^{\pm 1}]$ generated by $\init_\omega (I)$. The Bieri--Groves theorem states that the dimension of the tropical variety $\trop(I)$ is equal to the Krull dimension of the ideal~$I$.  If $\omega$ is in the relative interior of a maximal Gr\"obner cone of $\trop(I)$, then $\init_\omega(I)$ is homogeneous with respect to gradings in a linear space $L$ of dimension $\dim(I)$.  The ideal $\langle \init_\omega (I) \rangle \cap \CC[L^\perp \cap \ZZ^n]$ is a zero dimensional ideal whose length (or degree) is equal to $\mult{\trop(I)}{\omega}$.  To compute the multiplicity symbolically, we can find a generating set of $\langle \init_\omega (I) \rangle$ consisting of Laurent polynomials with exponents lying in $L^\perp$.  After choosing a lattice basis for $L^\perp \cap \ZZ^n$, we can rewrite the generators as Laurent polynomials in $\dim(L)$ variables.  The desired multiplicity is the degree of the ideal they generate.  

With numerical AG, we do not compute the generators of $\init_\omega (I)$, so we must devise a new method. Our idea is to cut the variety down to zero dimension by binomials, and then to compute the number of Puiseux series solutions of a zero dimensional ideal, which also gives the multiplicity of the tropical variety.  We first need to introduce the notion of multiplicities for ideals over Puiseux series.

Let $K$ be the field of Puiseux series in $t$ with complex coefficients convergent in a punctured neighborhood of $0$ and $I \subset K[x_1,\dots,x_n]$.  Let $\omega \in \RR^n$.  Following~\cite{JMM} we define the {\em $t$-initial form} $\tinit_\omega(f)$ of a polynomial $f \in K[x_1,\dots,x_n]$ with respect to $\omega$ as follows.  First take the sum of maximal degree terms in $f$ where $t$ has degree $-1$ and $x_1,\dots,x_n$ have degree $\omega_1,\dots,\omega_n$ respectively, then substitute $1$ for all powers of $t$.  For example, $$\tinit_{(1,2)}((3t^{1/2}+1)xy + ty + 5 t^{-3/2} x) = (3t^{1/2}xy + 5 t^{-3/2}x)|_{t=1} = 3xy+5x.$$
The {\em $t$-initial ideal} $\tinit_\omega(I)$ of $I$ is the ideal in $\CC[x_1,\dots,x_n]$ generated by the $t$-initial forms of elements of $I$.  

The tropical variety of $I$ over $K$ is defined as the set of $\omega$ such that the $\tinit_\omega(I)$ contains no monomials, and the multiplicity of $\omega$ in $\trop(I)$ as the sum of multiplicities of the monomial-free minimal associated primes of $\tinit_\omega(I)$. If the ideal $I$ is generated by polynomials over $\CC$, then the $t$-initial ideal coincides with the usual initial ideal, and the tropical variety over $K$ is the same as that over $\CC$.

We will now look at how multiplicities change when we intersect a tropical curve and a (usual) hyperplane.  Let $I$ be an ideal in $\CC[x_1,\dots,x_n]$ defining a curve in $(\CC^*)^n$ as before, and let $H$ be a hyperplane defined by $ X \cdot v = c$ where $v$ is a primitive integral  vector and $c \in \QQ\setminus\{ 0\}$.  Then $\trop(I)$ is a tropical curve and $H$ is equal to $\trop(\bx^v - C)$ for any $C \in K$ with $\deg(C)=c$. (Note that the degree of $t$ is $-1$.) Since $\trop(I)$ consists of rays emanating from the origin and $H$ does not go through the origin, the intersection $\trop(I) \cap H$ is transverse, i.e.\ every intersection point lies in the relative interior of a ray in $T$.  
Then
$$\trop(I + \langle \bx^v - C \rangle) = \trop(I) \cap_{st} \trop(\langle \bx^v - C \rangle) = \trop(I) \cap H,$$
where $\cap_{st}$ denotes {\em stable intersection}, and the first equality holds since $t$ is transcendental over the original polynomials, making the coefficients of the binomial {\em generic}~\cite[Section~3]{stableIntersection}.
By the multiplicity formula for stable intersections, for a point $\omega$ in the intersection lying in a ray in primitive integral direction $r$, we have
$$\mult{\trop(I + \langle \bx^v - C \rangle)}{\omega} = |r \cdot v| \cdot \mult{\trop(I)}{\omega}.$$  Note that the $H$ has multiplicity $1$ as the tropical variety of $\bx^v - C$ because $v$ is primitive.


\medskip

Let us now go back to the  problem of computing $\mult{\trop(I)}{\omega}$ numerically. We will assume that $\omega\in\ZZ^n\setminus\{0\}$ is primitive. First we cut the variety down to zero dimension as follows. We choose $v\in\ZZ^n$ such that  $\omega \cdot v = -1$. This is possible since $\omega$ is primitive.  The tropical variety of the binomial $\bx^v + t$ is the hyperplane defined by $X \cdot\, v = -1$, which intersects the candidate ray transversely at $\omega$.  If $\omega \in \trop(I)$, then $\omega$ is also in $\trop(I + \langle \bx^v + t \rangle)$, and the multiplicity of $\omega$ in $\trop(I + \langle \bx^v + t \rangle)$ is equal to the multiplicity of $\omega$ in $\trop(I)$ because $|\omega \cdot v| = 1$.

\begin{figure}
\includegraphics[scale=1.2]{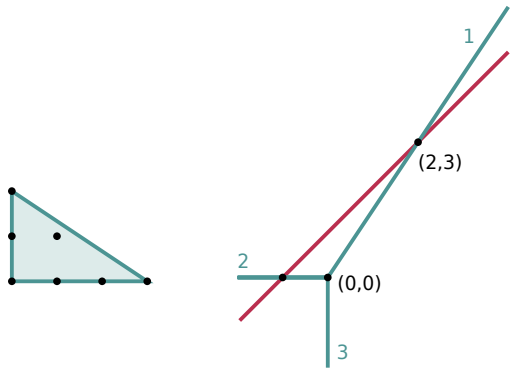}
\caption{The Newton polygon and the tropical curve of the polynomial $1+x^3+y^2$ are shown in blue.  The tropical curve of the binomial $ x + ty$ is shown is red.  It has slope $1$ and meets the blue curve with multiplicity $1$ at $(2,3)$.  See Example~\ref{ex:triangle}.}
\label{fig:triangle}
\end{figure}

\begin{example}
\label{ex:triangle}
Let $I = \langle 1 + x^3 + y^2 \rangle$.  Suppose we want to check whether the vector $\omega = (2,3)$ is in the tropical curve $\trop(I)$.  The vector $v = (1,-1)$ satisfies $\omega \cdot v = -1$.  Then $\omega \in \trop(I)$ if and only if $\omega \in \trop( \langle 1 + x^3 + y^2, x + ty \rangle)$. See Figure~\ref{fig:triangle}. \qed
\end{example}

We have now reduced the problem of computing multiplicities to zero dimensional ideals over $K$. The next statement follows from \cite[Proposition~3.4.8]{MaclaganSturmfels}.



\begin{theorem}[Fundamental Theorem of Tropical Geometry with multiplicities] 

For a zero dimensional ideal $J$ and any $\omega \in \trop(J)\cap\QQ^n$, the multiplicity of~$\omega$ in $\trop(J)$ is equal to the number of Puiseux series zeroes of $J$ with valuation~$\omega$, counted with multiplicity.
\label{thm:fundamental}
\end{theorem}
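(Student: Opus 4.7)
The plan is to reduce the statement to the structure of a primary decomposition of $J$ over the algebraically closed Puiseux series field $K$. Since $J \subset K[x_1,\ldots,x_n]$ is zero-dimensional, $K[x_1,\ldots,x_n]/J$ is Artinian with length equal to the total number of Puiseux zeros of $J$ counted with multiplicity. First I would write $J = Q_1 \cap \cdots \cap Q_r$ where each $Q_i$ is $\mathfrak{m}_i$-primary, with $\mathfrak{m}_i$ the maximal ideal at a single Puiseux zero $\xi^{(i)}$ of multiplicity $m_i = \dim_K K[x]/Q_i$. The goal then becomes showing that the monomial-free minimal associated primes of $\tinit_\omega(J)$ correspond exactly to those $\xi^{(i)}$ with $\val(\xi^{(i)}) = \omega$, with their multiplicities summing to $\sum_{\val(\xi^{(i)})=\omega} m_i$.

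Next I would establish a dichotomy for each primary component $Q_i$. If $\val(\xi^{(i)}) \neq \omega$, some coordinate $j$ satisfies $\val(\xi^{(i)}_j) \neq \omega_j$, and since $(x_j - \xi^{(i)}_j)^N \in Q_i$ for $N$ large, a direct computation of $\tinit_\omega(x_j - \xi^{(i)}_j)$ yields a monomial (either $x_j$ or a nonzero constant, depending on which of $\omega_j$ and $\val(\xi^{(i)}_j)$ is larger). Raising to the $N$th power produces a monomial inside $\tinit_\omega(Q_i)$, so this component contributes nothing to the sum over monomial-free primes. Conversely, if $\val(\xi^{(i)}) = \omega$, I would show that $\tinit_\omega(Q_i)$ is monomial-free with multiplicity exactly $m_i$ by setting up a Gr\"obner-style flat degeneration over the discrete valuation ring $\CC[[t^{1/d}]]$ for suitable $d$: the $\omega$-weighted $t$-initial ideal arises as the special fiber of a flat family whose generic fiber is $Q_i$, and flatness of the family preserves length.

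Combining these local analyses via $J = \bigcap_i Q_i$ gives, at least at the level of monomial-free components, $\tinit_\omega(J) = \bigcap_{\val(\xi^{(i)})=\omega} \tinit_\omega(Q_i)$. Summing multiplicities over the monomial-free minimal primes then produces $\mult{\trop(J)}{\omega} = \sum_{\val(\xi^{(i)})=\omega} m_i$, which is precisely the number of Puiseux zeros of $J$ of valuation $\omega$ counted with multiplicity. The hardest part will be justifying both the commutation of $\tinit_\omega$ with primary decomposition on monomial-free components and the length-preservation of the Gr\"obner degeneration; both pieces are packaged in \cite[Proposition~3.4.8]{MaclaganSturmfels}, so in practice one may cite that result rather than re-derive the flatness argument from scratch.
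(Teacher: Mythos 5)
The paper does not actually prove this statement: it disposes of it with the single sentence ``The next statement follows from \cite[Proposition~3.4.8]{MaclaganSturmfels}.'' Your proposal therefore goes well beyond what the paper does --- you reconstruct the standard argument behind that proposition rather than merely invoking it. Your outline is correct in shape: the primary decomposition over the algebraically closed Puiseux field, the dichotomy showing that a component at a point $\xi^{(i)}$ with $\val(\xi^{(i)})\neq\omega$ forces a monomial $\bigl(\tinit_\omega(x_j-\xi^{(i)}_j)\bigr)^N$ into the $t$-initial ideal (using multiplicativity of $\tinit_\omega$ in the domain $K[x]$), and the flat Gr\"obner degeneration over $\CC[[t^{1/d}]]$ to see that a component with $\val(\xi^{(i)})=\omega$ contributes exactly its length $m_i$. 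The one place where your write-up is looser than it should be is the claimed identity $\tinit_\omega(J)=\bigcap_{\val(\xi^{(i)})=\omega}\tinit_\omega(Q_i)$ ``at the level of monomial-free components'': in general only the inclusion $\tinit_\omega(\bigcap_i Q_i)\subseteq\bigcap_i\tinit_\omega(Q_i)$ is automatic, and the additivity of multiplicities is genuinely obtained from the length computation in the flat family (after saturating by the coordinate monomials), not from an ideal-theoretic commutation. Since you explicitly flag this, together with the length-preservation, as the hard steps and defer them to \cite[Proposition~3.4.8]{MaclaganSturmfels} --- exactly the citation the paper rests on --- there is no genuine gap; the trade-off is that the paper's bare citation is shorter, while your sketch makes visible where the content of that proposition is actually used.
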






By substituting $x_i$ with $t^{-\omega_i} x_i$, we may assume that $\omega = 0$. Then we would like to check whether any zero in $(K^*)^n$ of $I$ has valuation $0$.   

\medskip
\noindent{\bf Example~\ref{ex:triangle} continued.} We now substitute $x$ with $x t^{-2}$ and $y$ with $y t^{-3}$ to obtain $\langle 1 + t^{-6} x^3 + t^{-6} y^2, t^{-2} x + t^{-2} y \rangle$. We would like to check whether the ideal has a zero in $(K^*)^2$ all of whose coordinates have non-zero constant terms. Note that as $t \rightarrow 0$, the dominant terms are $x^3 + y^2$ and $x+y$, which are the $t$-initial forms of the original polynomials.
\qed

\begin{lemma}\label{lemma:main}
Let $I \subset K[x_1,\dots,x_n]$ define a zero dimensional variety $V(I)\subset (K^*)^n$ with a generating set $F \subset \CC[t^{\pm 1},x_1,\dots,x_n]$.  For $a \neq 0$ in $\CC$, let $I_a\subset\CC[x_1,\dots,x_n]$ be the ideal obtained from $I$ by substituting $t$ with $a$.  As $a \rightarrow 0$, zeroes of $I_a$ either converge to the coordinate-wise constant terms of points in $V(I)$ or diverge to $\infty$.
\end{lemma}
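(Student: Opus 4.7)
The plan is to view $F$ as an algebraic family of polynomial systems parametrized by $t \in \CC^*$ and analyze the limiting behavior of its zeros at $t = 0$ via Puiseux parametrization of the branches of the incidence variety. First I would clear $t$-denominators in $F$ by multiplying by suitable powers of $t$, obtaining polynomials $\tilde F \subset \CC[t, x_1, \ldots, x_n]$ whose zero locus $Y = V(\tilde F) \cap (\CC^* \times \CC^n)$ is exactly the set of pairs $(a, x)$ with $x \in V(I_a)$. Because $I$ is zero-dimensional over $K$, the generic fiber of the projection $\pi : Y \to \CC^*$ is finite, so $Y$ has pure dimension one. Compactifying by taking the closure $\overline{Y}$ inside $\CC \times \PP^n$, the projection extends to a proper map $\overline{\pi} : \overline{Y} \to \CC$ whose fiber over $0$ is a finite set of points in $\{0\} \times \PP^n$.

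Next I would Puiseux-parametrize the branches of $\overline{Y}$ at each point of $\overline{\pi}^{-1}(0)$. Each such branch is the image of a map $\tau \mapsto (\tau^m, \alpha_1(\tau), \ldots, \alpha_n(\tau))$ with integer $m \geq 1$ and each $\alpha_i$ a convergent Laurent series in $\tau$ (allowing negative exponents precisely when the branch approaches a point at infinity in the $i$-th coordinate). Setting $t = \tau^m$ exhibits the branch as an $n$-tuple of convergent Puiseux series in $t$ satisfying every $f \in F$; since by hypothesis $V(I) \subset (K^*)^n$, none of these $\alpha_i$ is identically zero, and the resulting $n$-tuples range over exactly the zeros of $I$ in $(K^*)^n$.

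Finally, for each sufficiently small $a \in \CC^*$, every zero $x(a) \in \CC^n$ of $I_a$ arises as $(\alpha_1(\tau), \ldots, \alpha_n(\tau))$ for some branch and some $\tau$ with $\tau^m = a$. As $a \to 0$ we have $\tau \to 0$: if every $\alpha_i$ is an honest Taylor series in $\tau$, then $x(a) \to (\alpha_1(0), \ldots, \alpha_n(0))$, which is precisely the coordinate-wise constant-term vector of the corresponding Puiseux series zero of $I$; otherwise some $\alpha_i$ has a pole at $\tau = 0$ and $|x(a)| \to \infty$.

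The main obstacle is verifying that every zero of $I_a$ for small $a$ is accounted for by such a branch, with no stray zeros appearing near $t = 0$ that do not come from a branch limiting to a point of $\overline{\pi}^{-1}(0)$. This is where properness of $\overline{\pi}$ combines with the zero-dimensionality of $I$ over $K$: the generic fiber cardinality of $\pi$ is bounded by $\dim_K K[x_1,\ldots,x_n]/I$, which constrains the irreducible components of $Y$ meeting $\{t = a\}$ to be exactly those limiting to $\overline{\pi}^{-1}(0)$. Once this bookkeeping is in place, the lemma follows directly from the Puiseux parametrization.
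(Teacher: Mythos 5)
Your argument is essentially the paper's own proof: both regard $t$ as a complex variable, view the zeros of the $I_a$ as fibers of a curve in $(t,x)$-space, and apply Puiseux's theorem at $t=0$, so that each zero of $I_a$ for small $a$ is the evaluation at $t=a$ of a Puiseux-series zero of $I$, converging to the coordinate-wise constant terms when every coordinate series is regular at $t=0$ and diverging when some coordinate has a pole. The paper's proof is a three-line sketch; your compactification in $\CC\times\PP^n$ and the properness argument supply the bookkeeping (that no zeros of $I_a$ escape the finitely many branches over $t=0$) which the paper leaves implicit.
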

\begin{proof}
Consider $t$ as a complex variable and consider the curve in $(\CC^*)^{n+1}$ defined by $I$.
By Puiseux's theorem, the curve is  parameterized by Puiseux series in $t$ locally near $t=0$.  Thus for sufficiently small $a$, the points along homotopy paths of zeroes of $I_a$ as $a \rightarrow 0$ are obtained by plugging in $a$ for $t$ in the Puiseux series solutions.  Either they diverge to $\infty$ when the degree is positive, or they converge to the constant term. (Recall that the degree of $t$ is $-1$.)  
\end{proof}

\noindent{\bf Example~\ref{ex:triangle} continued.} 
Following the 3 homotopy paths for zeroes of $J = \langle 1 + t^{-6} x^3 + t^{-6} y^2, t^{-2} x + t^{-2} y \rangle$ as $t \rightarrow 0$ finds us one point $(-1,1)$ in the torus, which is also a zero of the $t$-initial ideal $\tinit_0(J)$.  Its coordinates are leading coefficients of the Puiseux series solution of $J$ with degree $(0,0)$.  






\begin{remark}\label{remark:numerical-multiplicity}
Assume the case of complete intersection and regularity as in Remark~\ref{remark:C(s)}. The multiplicity of a ray spanned by $\omega$ equals the number of paths converging to points in the torus as $a\to 0$ in Lemma~\ref{lemma:main} as one takes the homotopy for a generic smooth path $a(s)$, $s\in[0,1]$, $a(1)=0$. For instance, $a(s)=A(1-s)$ for a generic $A\in\CC$ works: it avoids singularities with a possible exception of $s=1$.

In the general case, in addition to standard regularization techniques mentioned in Remark~\ref{remark:C(s)}, one can compute the multiplicities of the converging continuation paths numerically via Macaulay dual spaces (e.g.\ see \cite{Krone:NumericalHilbert} for the description of the method and a software implementation). These multiplicities are the multiplicities of the corresponding Puiseux series. \qed
\end{remark}

\subsection{Checking Completeness}
\label{section:degree}

After computing a collection of rays with multiplicities, we wish to decide if we have found all rays.  First we can check if the balancing condition is satisfied.  If it is, then we can check if the degree of the current curve agrees with the degree of the classical curve in $(\CC^*)^n$, which can be computed numerically by counting the number of solutions in the intersection of the curve with a generic hyperplane. To compute the degree of the  tropical curve found thus far, we can compute its stable intersection with a tropical hyperplane \cite[Section 3]{stableIntersection}.  This computation is very easy to do for curves, as shown by the following lemma. Recall that we are using the max-convention, so the tropical hyperplane has rays in direction $-e_1,\dots,-e_n$, and $(1,\dots,1)$ and contains the cones spanned by any $n$ of the rays.

\begin{lemma}[Degree of Tropical Curve]
\label{lem:degree}
Suppose a tropical curve in $\RR^n$ consists of rays in primitive integral directions $r_1, \dots, r_k$ with multiplicities $m_1,\dots,m_k$ respectively.  We can decompose each $r_i$ as a positive linear combination of $-e_0, -e_1,\dots,-e_n$, where $e_0 : = -e_1-\cdots-e_n$, such that not all of the $e_i$'s are used. The degree of the tropical curve is the number of each $e_i$ we get this way, counted with multiplicities. 
\end{lemma}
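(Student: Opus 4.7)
The plan is to establish the lemma in three stages: exhibit the decomposition, show the weighted count is independent of the chosen index, and identify this common value with the degree via stable intersection with a tropical hyperplane.

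For the decomposition, setting $r_0 := 0$ one has $r = \sum_{j=0}^n \alpha_j (-e_j)$ if and only if $\alpha_k = \alpha_0 - r_k$ for $k \geq 1$; the free parameter $\alpha_0$ reflects the one-dimensional relation $\sum_{j=0}^n (-e_j) = 0$. Choosing $\alpha_0 := \max(0, r_1, \ldots, r_n)$ makes all $\alpha_j \geq 0$ with at least one $\alpha_j$ vanishing (the one attaining the max). Let $\alpha_{k,i}$ denote the resulting coefficients for the ray $r_k$ and define $d_i := \sum_k m_k \alpha_{k,i}$. Substituting into the balancing condition $\sum_k m_k r_k = 0$ and collecting terms produces $\sum_{i=0}^n d_i(-e_i) = 0$. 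Since the kernel of the map $(\beta_0, \ldots, \beta_n) \mapsto \sum_i \beta_i(-e_i)$ is spanned by $(1, \ldots, 1)$, all $d_i$ agree; call this common value $d$.

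To identify $d$ with the degree, I would use that the degree of the classical curve in $(\CC^*)^n$ equals the stable intersection $T \cdot_{\mathrm{st}} \cH$, where $\cH$ is the standard tropical hyperplane, the corner locus of $\max(x_1, \ldots, x_n, 0)$. Translating to $\cH + v$ for small generic $v$ with all $v_i > 0$, I intersect each ray $\rho_k = \{tr_k : t \geq 0\}$ of $T$ with $\cH + v$: the intersection points correspond exactly to the values $t > 0$ at which the argmax of $\max(0, tr_{k,1} - v_1, \ldots, tr_{k,n} - v_n)$ switches. A switch from index $a$ to index $b$ is a transverse crossing of a codimension-$1$ cone of $\cH + v$ with primitive normal $e_a - e_b$ (with $e_0 := 0$), contributing local intersection multiplicity $m_k|r_{k,a} - r_{k,b}|$ (setting $r_{k,0} := 0$). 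Because the argmax trajectory of a family of affine functions in $t$ is the upper envelope, the index sequence along $\rho_k$ is monotone in slope: it starts at $0$ (since the constant term $0$ wins at $t = 0$ when $v_i > 0$) and ends at the index achieving $\max(0, r_{k,1}, \ldots, r_{k,n}) = \alpha_{k,0}$. Hence the contributions telescope to $m_k \alpha_{k,0}$, and summing over rays yields $T \cdot_{\mathrm{st}} \cH = \sum_k m_k \alpha_{k,0} = d_0 = d$.

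The main obstacle is the last step: correctly identifying the local lattice index at each transversal crossing as $|r_{k,a} - r_{k,b}|$, and verifying that for generic $v$ no intersection occurs in a codimension-$\geq 2$ stratum of $\cH + v$ (so every crossing corresponds to a clean single argmax switch). Both reduce to routine lattice and upper-envelope computations, but the bookkeeping must be carried out carefully to avoid missed contributions.
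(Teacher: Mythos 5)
Your proof is correct, but the key step is argued differently from the paper. The paper's proof is a cycle-level manipulation: for each ray $r$ with multiplicity $m$ and decomposition $mr=\sum_i a_i(-e_i)$ with some $a_i=0$ (say $a_0=0$), it adds to the curve the balanced one-cycle with rays $r,-e_1,\dots,-e_n$ and weights $-m,a_1,\dots,a_n$; since that cycle is supported in the cone spanned by $-e_1,\dots,-e_n$, it can be translated in direction $e_0$ off the tropical hyperplane, so it has stable intersection zero, and adding it replaces $r$ by standard rays without changing the degree. Iterating reduces to the standard tropical line with weight $d$, whose degree is $d$. You instead compute the stable intersection $T\cdot_{\mathrm{st}}\cH$ directly: perturb $\cH$ by a generic $v$ with $v_i>0$, follow the argmax of the affine functions $0,\;tr_{k,j}-v_j$ along each ray, identify each argmax switch as a transverse wall-crossing with local multiplicity $m_k|r_{k,a}-r_{k,b}|$, and telescope the increasing slopes from $0$ to $\max(0,r_{k,1},\dots,r_{k,n})=\alpha_{k,0}$ to get $m_k\alpha_{k,0}$ per ray. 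Your local lattice indices are right (the span of the wall where indices $a,b$ tie is the saturated lattice $\{u:u_a=u_b\}$, giving index $|r_{k,a}-r_{k,b}|$), and genericity of $v$ does rule out hitting codimension-two strata, so the two worries you flag at the end are indeed routine. What each approach buys: the paper's argument is shorter and needs only soft properties of stable intersection (translation invariance, and that disjoint supports give zero), while yours is more self-contained — it computes the intersection number from first principles, and your middle step (using balancing and the fact that the kernel of $(\beta_i)\mapsto\sum_i\beta_i(-e_i)$ is spanned by $(1,\dots,1)$) explicitly proves that the counts $d_0=\dots=d_n$ agree, a point the lemma statement presupposes and the paper leaves implicit.
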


For example, consider rays in directions $(-1,2)$, $(-2,-3)$, and $(4,-1)$ with multiplicities $2$, $1$, and $1$ respectively. We can then decompose the rays as
\begin{align*}
(1,-2) & = 0\cdot(-1,0) + 3\cdot(0,-1) + 1\cdot(1,1)\\
(2,3) &= 1 \cdot (-1,0) + 0 \cdot(0,-1)+ 3 \cdot(1,1)\\
(-4,1) &= 5 \cdot(-1,0) + 0 \cdot(0,-1) + 1 \cdot(1,1).
\end{align*}
The degree of the curve is $6$. Note that $(-1,2)$ has multiplicity $2$.

\begin{proof}
Let $r$ be a primitive vector in the tropical curve with multiplicity $m$.
Since $-e_0,-e_1,\dots,-e_n$ positively span $\ZZ^n$, we can write $mr = a_0 (-e_0) + \cdots + a_n (-e_n)$ where $a_i$'s are non-negative integers and at least one of them is zero. WLOG, suppose $a_0 = 0$.  Consider the tropical cycle on rays $r, -e_1, \dots, -e_n$ with multiplicities $-m$, $a_1, \dots, a_n$ respectively. Its support lies entirely in the cone spanned by $-e_1, \dots, -e_n$. The stable intersection of this tropical cycle with a tropical hyperplane is zero because we can translate the cycle in direction $e_0$ and get an empty intersection with the tropical hyperplane.  Adding this tropical cycle to the original tropical curve does not change the stable intersection with a hyperplane but has the effect of replacing the ray $r$ with the collection of rays $-e_1, \dots, -e_n$, with multiplicities.  In this way, we can transform our tropical curve, while preserving the degree, to the curve consisting of rays $-e_0,\dots,-e_n$, each with multiplicity $d$. The degree of the new curve is the multiplicity of the origin in its stable intersection with the tropical hyperplane, which is ~$d$.
\end{proof}

Our implementation carries out the above computation as follows: Lift the found rays to $\RR^{n+1}$ by adding a zero coordinate.  For each ray, subtract the maximum coordinate value from all coordinates.  The sum of the new rays is a multiple of $(-1,-1,\dots,-1)$ in $\ZZ^{n+1}$ if and only if the rays satisfy the balancing condition.  This multiple is the degree of the tropical curve.  

In the example above, we first write the rays as $(1,-2,0)$, $(2,3,0)$, and $(-4,1,0)$ then as $(0,-3,-1)$, $(-1,0,-3)$, and $(-5,0,-1)$ by subtracting away the maximum coordinates. Their sum with multiplicities is $(-6,-6,-6)$, so the degree of the curve is $6$.

If the sum is not equal to $-\deg(I) \cdot (1,1,\cdots,1)$, then we know some rays are missing, and the difference gives us an idea of how far we are from the correct answer.

The following is an immediate consequence of Lemma~\ref{lem:degree}.
It limits the possible primitive vectors of a tropical curve to a finite set.
\begin{corollary}
\label{cor:degree}
For a curve in $(\CC^*)^n$ of degree $d$, the absolute values of integers appearing in the primitive ray directions of its tropical curve is bounded from above by $d$.
\end{corollary}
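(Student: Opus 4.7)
The plan is to pull the bound straight out of the positive decomposition supplied by Lemma~\ref{lem:degree}. Fix a primitive ray direction $r = (r_1, \ldots, r_n)$ of the tropical curve, and let $r = \sum_{j=0}^{n} a_j(-e_j)$ be its decomposition with nonnegative integer coefficients $a_j$. Since $-e_0 = (1, 1, \ldots, 1)$, while for $k \geq 1$ the vector $-e_k$ is a negative standard basis vector, the $k$-th coordinate reads off as $r_k = a_0 - a_k$. Both $a_0$ and $a_k$ being nonnegative then forces $|r_k| \leq \max(a_0, a_k)$, so it is enough to bound each individual coefficient $a_j$ by $d$.

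To do so, I would enumerate all primitive rays $r^{(1)}, \ldots, r^{(s)}$ with multiplicities $m_1, \ldots, m_s$ and decomposition coefficients $a_{i,j}$. According to Lemma~\ref{lem:degree}, the degree $d$ of the tropical curve equals $\sum_{i=1}^{s} m_i a_{i,j}$ for each fixed $j \in \{0, 1, \ldots, n\}$ (i.e.\ the count, with multiplicities, of how many copies of $-e_j$ appear across all decompositions). Since every term in this sum is nonnegative and each $m_i \geq 1$, any individual coefficient satisfies
\[
a_{i,j} \;\leq\; m_i a_{i,j} \;\leq\; \sum_{i'=1}^{s} m_{i'} a_{i',j} \;=\; d.
\]
Combining with the coordinate identity from the previous paragraph gives $|r^{(i)}_k| \leq \max(a_{i,0}, a_{i,k}) \leq d$ for every ray direction and every coordinate $k$, which is exactly the claim.

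I do not anticipate any real obstacle: the corollary is advertised as immediate. The only subtlety is reading off $r_k = a_0 - a_k$ correctly from the decomposition, after which the coordinates are just differences of two nonnegative integers each bounded by $d$, and the result drops out mechanically.
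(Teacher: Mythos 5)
Your argument is correct and is essentially the intended one: the paper offers no separate proof, calling the corollary an immediate consequence of Lemma~\ref{lem:degree}, and reading off each coordinate as $a_0-a_k$ from the nonnegative decomposition and bounding each coefficient by the degree sum $\sum_i m_i a_{i,j}=d$ is exactly that deduction. (The paper's example decomposes $-r_i$ rather than $r_i$, but this sign convention does not affect the absolute values.)
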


\section{Implementation}\label{section:implementation}

We implement our algorithms in a Macaulay2 package using three other software packages for homotopy tracking: PHCpack~\cite{PHCpack}, Bertini~\cite{Bertini-book}, and NAG4M2~\cite{Leykin:NAG4M2}.
All three of them have exclusive features that are utilized by this project, which would be hard to accomplish without convenient Macaulay2 interfaces for the first two, \cite{GPV:PHCpackM2} and \cite{BGLR:BertiniM2}.

We assume that $I = \ideal{F} \subset \CC[\bx] = \CC[x_1,\ldots,x_n]$ where $F$ is a set of $n-1$ generators defining a curve in the torus $(\CC^*)^n$. What to do if $I$ is not a complete intersection in the torus is briefly discussed in Remarks~\ref{remark:C(s)} and \ref{remark:numerical-multiplicity}.

Note that we do not assume that $\dim I = 1$. In fact, in all of our nontrivial examples $V(I)\subset\CC^n$ is a higher dimensional variety with positive dimensional components in the coordinate hyperplanes.

As outlined in the beginning of~\S\ref{section:main} our approach has three parts. 

\subsection{Computing candidate rays} 
Our first task is to find the points $V(I_{\ba,C})\cap (\CC^*)^n$ where $I_{\ba,C} = \langle \bx^\ba - C\rangle$ as in (\ref{equation:I_aC}).  This can be done efficiently with polyhedral homotopies implemented in PHCpack~\cite{PHCpack}. 

Then we track the found points along the homotopy induced by $C(s)$ of Remark~\ref{remark:C(s)} to find the corresponding points of $V(I_{\ba,C'})\cap (\CC^*)^n$. This step can be accomplished with any of the three homotopy trackers. In practice, as the magnitudes of the solution coordinates grow (or approach 0), a homotopy tracker may give up on some paths: if the norm of an approximate solution exceeds a heuristically set threshold, then the corresponding path is truncated; the same happens if the path is judged as passing too closely near a singularity.  

The packages NAG4M2 and PHCpack perform much faster on this task than Bertini, although Bertini tends to give up on fewer paths, since it adapts precision according to numerical conditioning, while the other two operate with fixed (machine) precision.  

One way or another, the described heuristic procedure results in a set of difference vectors (as in Figure~\ref{fig:amoeba}), which are then converted into primitive integer vectors as described in~\S\ref{section:find}.
 
\subsection{Computing multiplicities}

Consider the branched cover of the complex plane where 
\begin{itemize}
\item the total space is the curve $V(F)\subset(\CC^*)^{n+1}$ where $F$ is the generating set in Lemma~\ref{lemma:main}, and
\item the covering map is the projection to the coordinate $t$.  
\end{itemize}
It is useful to see the path $a(s)\in\CC$, $s\in[0,1]$, of Remark~\ref{remark:numerical-multiplicity} as being embedded in the base space.  This is depicted in Figure~\ref{fig:base-space} where the ramification points are shown as red dots. 
As long as $a(s)$ does not go through the (finitely many) ramification points for $s\in[0,1)$, the lifted path in the total space is regular with a possible exception of the end $s=1$.   

\begin{figure}[h]
\includegraphics[scale=1.2]{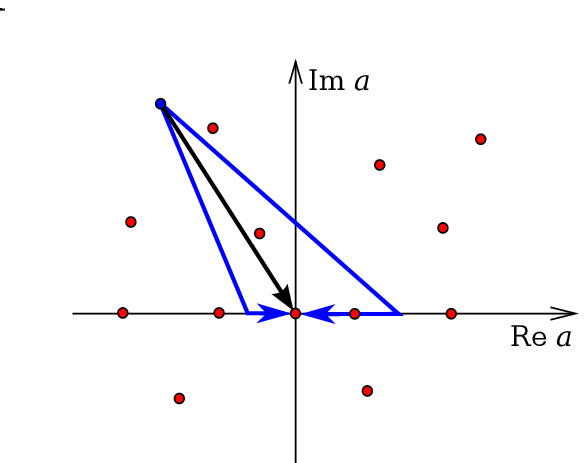}
\caption{Computing ray multiplicity: several ways to approach the origin.}
\label{fig:base-space}
\end{figure}

Implementing the homotopy induced by $a(t)$, we instruct the homotopy tracking software to be extremely conservative: for most nontrivial examples the target points are not only singular, but are also not isolated. In most problematic of our examples, the curve that is being tracked intersects some component of a much higher dimension at the boundary of the torus.    

\begin{remark} When the input ideal $I$ is defined over $\QQ$ the points in the torus to which our homotopy converges are solutions to $\tinit_\omega(I)$, which is again defined over $\QQ$. 

Not only can we double check that solutions are either real or come in conjugate pairs, but it is also possible to recover $\tinit_\omega(I)$ from these approximate solutions, given enough accuracy and a known (or assumed) bound on the coefficients of generators of $\tinit_\omega(I)$. 
\qed
\end{remark}

When the input is defined over $\RR$, our experiments suggest that instead of the path $a(s)$  approaching $0$ along a segment as suggested in Remark~\ref{remark:numerical-multiplicity}, it is more robust to break the path into two segments:
\begin{enumerate}
\item $a_1(s)$ approaching $a_1(1)=\varepsilon$, where $\varepsilon\in\RR$ is a small nonzero number, and
\item $a_2(s)$ starting at $a_2(0)=a_1(1)$ and approaching $a_2(1)=0$ along the real axis. 
\end{enumerate}
Note that in the second case (see Figure~\ref{fig:base-space}) the segment connecting $\varepsilon$ and the origin may contain a ramification point, therefore, some paths may be lost. Choosing $\varepsilon$
 ``sufficiently close'' to the origin adds yet another heuristic layer to our procedure.

\subsection{Verifying completeness}
Computing the degree of the curve in the torus can be done by slicing with a random hyperplane and approximating the resulting points via polyhedral homotopy of PHCpack (computing their multiplicities numerically if not regular). 

The non-numerical computations described in~\S\ref{section:degree} are carried out in Macaulay2.

\section{Example of the $A$-polynomial of a knot} \label{section:knots}

We tested our methods by computing tropical curves of $A$-polynomials of some knots. For each knot, one can associate a polynomial in two variables called the {\em $A$-polynomial}. The boundary slopes of the Newton polygon of the $A$-polynomial are {\em boundary slopes of incompressible surfaces} in the knot complement \cite{CCGLS}. Computation of these boundary slopes is of interest in the knot theory community. See, for example, \cite{GvdV}.

The computation of the $A$-polynomial can be reduced to an elimination problem.  We can get a set of polynomial equations that cut out a curve in a complex algebraic torus, for example, with the software SnapPy \cite{SnapPy}.  The desired plane curve defined by the $A$-polynomial is the image of this curve under a monomial map, also computed by SnapPy.  However, this set of elimination problems is very challenging.  

\begin{example}[Knot $8_1$] \label{example:8-1}
The algebraic curve of interest is defined by the ideal 
\begin{eqnarray*}
I = \langle&{z}_{1}+{w}_{1}-1,\ {z}_{2}+{w}_{2}-1,\ {z}_{3}+{w}_{3}-1,\ {z}_{4}+{w}_{4}-1,\ {z}_{5}+{w}_{5}-1,\ &\\
           &-{z}_{2} {z}_{4} {w}_{1} {w}_{5}+{w}_{2} {w}_{4},\ {z}_{2} {z}_{4} {z}_{5}^{2} {w}_{1}^{2}-{z}_{1}^{2} {w}_{2} {w}_{3} {w}_{4} {w}_{5},\ -{z}_{3}^{2} {w}_{1}+{w}_{3}^{2},\ -{z}_{2} {z}_{4} {z}_{5}^{2}+{w}_{5}^{2}& \rangle
\end{eqnarray*}
of $R=\QQ[z_1, \ldots, z_5, w_1, \ldots, w_5]$, a polynomial ring in 10 variables.

In the first stage of the algorithm 20 rays are deemed as candidates for parts of the tropicalization of the curve. The 8 primitive vectors below span the rays ``passing'' the second stage, i.e.\ their computed multiplicities are non-zero:
\[
\begin{array}{cc}
\text{multiplicity} & \text{ray}\\
3 &  (0,1,0,-1,1,0,1,0,0,1)\\ 4 &  (-1,1,0,1,-1,0,1,0,1,0)\\ 3 &  (0,-1,0,1,1,0,0,0,1,1)\\ 1 &  (0,0,0,-2,0,-4,-7,-2,0,-1)\\ 1 &  (0,-2,0,0,0,-4,0,-2,-7,-1)\\ 2 &  (2,-2,-1,0,0,2,0,0,-1,-1)\\ 2 &  (2,0,-1,-2,0,2,-1,0,0,-1)\\ 2 &  (-2,1,2,1,-1,0,1,2,1,0)
\end{array}
\]
One can check that the above rays, with multiplicities, sum up to the zero vector, so they satisfy the balancing condition. Moreover, the degree of the algebraic curve is computed numerically to be $22$, coinciding with the degree of the computed tropical curve. An independent Gfan computation confirms the list of rays.

One should point out that the points witnessing the listed multiplicities, which approximate the leading coefficients of the Puiseux series solutions corresponding to the ray, have coordinates that are close to algebraic numbers. For instance, the three points obtained for the first ray in the above list,
\[
\begin{array}{l}
(-2.24698, -1, .356896, -1, -.307979, 3.24698, 1, .643104, 1, .307979),\\
(.801938, -1, .692021, -1, -5.04892, .198062, 1, .307979, 1, 5.04892),\\
(-.554958, -1, -4.04892, -1, -.643104, 1.55496, 1, 5.04892, 1, .643104),
\end{array}
\] 
have the first coordinates satisfying $x^3+2x^2-x-1=0$ approximately. We can refine the approximation to an arbitrary precision.

The image of the curve under a monomial map is the one defined by the $A$-polynomial of the $8_1$ knot.  The tropicalization of the monomial map is a linear map given by the following matrix:
 $$\bgroup\begin{pmatrix}0&
     0&
     0&
     1&
     1&
     0&
     0&
     0&
     0&
     {-1}\\
     {-2}&
     1&
     0&
     3&
     4&
     2&
     {-3}&
     {-1}&
     1&
     {-3}\\
     \end{pmatrix}\egroup.$$
We can verify that the dual Newton polygon of the image plane curve has boundary slopes $-12$, $0$, and $4$, and that it coincides with Culler's computations~\cite{Culler}. 
\qed
\end{example}

\section{Future Directions}

We have presented a numerical method for computing a tropical curve from a generating set of its ideal.  Here we present some ideas for future research projects and possible applications.  Many details remain to be worked out carefully.

\subsection{Certification}\label{subsec:certification}

There is an established technique for certification in numerical AG, following from Smale's $\alpha$-theory, but it applies only to {\em regular} approximate zeros of a square system of polynomial equations. In the computation of the ray multiplicity described in~\S\ref{section:check}, if the point on the tracked homotopy path at $a=0$ is regular for the system of equations defining the path, then certification with this technique is possible.

However, the target solution of that homotopy may be singular.  Moreover, in many our examples it is not even isolated: there is a high dimensional component in the boundary of $(\CC^*)^n$ intersecting the curve containing the path at the target point. There are two directions that we see worth exploring:
\begin{itemize}
\item There is a potential for a hybrid symbolic-numerical technique: if the input is defined over $\QQ$, the the coordinates of the solutions that we obtain approximate algebraic numbers (see  Example~\ref{example:8-1}) and we can hope to recover generators of $\tinit_\omega(I)$ for the ray $\omega$. Assuming that the methods for symbolically computing $\tinit_\omega(I)$ are practically infeasible, can one still check that the initial ideal recovered from the numerical data is correct?
\item The possible component at $t=0$ that makes a target point singular could be in theory ``saturated out''. Assuming that computing $I:t^\infty$ is practically infeasible, is it possible to ``saturate numerically''? Here one should, perhaps, not be looking for rigorous certification but rather for a method that would improve the numerical approximation of the target points. 
\end{itemize}

\subsection{Toward Hybrid Methods}

At present, the leading method for computing tropical varieties from polynomials, implemented in Gfan \cite{Gfan,BJSST}, works by finding a cone in the tropical variety and traversing the tropical variety as a subfan of the Gr\"obner fan.  The numerical method fits in well with this approach and may help reduce the number of Gr\"obner basis computations.  To find a starting cone in the tropical variety, we can first cut the ideal down to a one dimensional one, and then find a ray in the tropical curve using our numerical methods.  The Gr\"obner cone of the original ideal containing this ray is a starting cone.  During fan traversal, we need to compute the link of the tropical variety at a codimension one cone.  Modulo the lineality space, the link is a tropical curve, so we can use the numerical oracle again. 

\subsection{Toward Higher Dimensions}

For simplicity let us assume that the variety is equidimensional.  We can try to compute various tropical curves lying in the tropical variety by slicing the variety with binomials.  We then need to develop polyhedral algorithms for patching the slices together.  Moreover, if we want to take affine slices away from the origin, we first need to extend our methods to non-constant coefficient tropical curves.

We can also reduce the problem to the case of hypersurfaces.
By the Hept--Theobald Theorem~\cite{zbMATH05567230} we can find a tropical basis of a $d$ dimensional tropical variety in $\RR^n$ by linearly projecting it onto some $d+1$ dimensional linear subspaces so that the images are hypersurfaces.  This corresponds to projecting the original variety in $(\CC^*)^n$ onto some $d+1$ dimensional subtori via monomial maps. Since we can compute the witness sets of projections numerically, we can use the methods of Hauenstein and Sottile to compute the dual Newton polytopes~\cite{HauensteinSottile}.  Alternatively, if we can compute non-constant coefficient tropical curves, then we can slice a tropical hypersurfaces down to curves, obtaining $2$-dimensional faces of the dual Newton polytope that can be fitted together to get all vertices of the polytope.  The tropical hypersurfaces of the tropical basis determine the support of the tropical variety.

Our method for computing multiplicities, however, works for tropical varieties of arbitrary (pure) dimension, as we can slice any variety down to zero-dimension and count convergent homotopy paths as in~\S\ref{section:check}.



\subsection{Applications to knot theory}
As we saw above, the knot examples provide a good family of interesting tropical curves in high ambient dimension that project to planar tropical curves, the tropicalizations of curves defined by the $A$-polynomials of knots.  

Computing that tropicalization (without knowing the $A$-polynomial) produces the {\em boundary slopes} of its Newton polygon, which are knot invariants that are already interesting. Moreover, since we are able to compute the multiplicities of the rays of the curve, the Newton polygon can be recovered: having better estimates of the monomial support may push the boundary of the numerical interpolation technique for computing $A$-polynomials developed by Culler {\em et al}~\cite{Culler}. In addition, the (non-planar) tropical curve that we compute provides even finer information about the knot that may have an interesting topological explanation. 

It will be worthwhile to explore the vast territory not yet known to knot theorists with hybrid methods for computing tropical curves.

\bibliographystyle{amsplain}
\bibliography{mybib} 
 
\end{document}